\newtheorem{prop}{Proposition}[section]
\numberwithin{equation}{section}
\def\ep{\epsilon}
\def\dif{\partial}
\begin{document}

\title{
A note on  time functions associated with effectively hyperbolic double characteristics}

\author{Tatsuo Nishitani\footnote{Department of Mathematics, Osaka University:  
nishitani@math.sci.osaka-u.ac.jp
}}

\date{}
\maketitle

\def\dif{\partial}
\def\al{\alpha}
\def\be{\beta}
\def\ga{\gamma}
\def\om{\omega}
\def\lam{\lambda}
\def\tika{{\tilde \nu}}
\def\baka{{\bar \nu}}
\def\varep{\varepsilon}
\def\tal{{\tilde\alpha}}
\def\tbe{{\tilde\beta}}
\def\tis{{\tilde s}}
\def\bas{{\bar s}}
\def\R{{\mathbb R}}
\def\N{{\mathbb N}}
\def\C{{\mathbb C}}
\def\Q{{\mathbb Q}}
\def\Ga{\Gamma}
\def\La{\Lambda}
\def\lr#1{\langle{#1}\rangle_{\gamma}}
\def\lrD{\langle{D}\rangle}
\def\mD{\lr{ D}_{\mu}}
\def\xim{\lr{\xi}_{\mu}}
\def\co{{\mathcal C}}
\def\op#1{{\rm op}({#1})}

\begin{abstract}
Geometrical aspects of effectively hyperbolic singular points over $t=0$ are discussed assuming that the characteristic roots are real only on the one side $t\geq 0$.  In particular, the difference from the case that the characteristic roots are real in both sides $t\geq 0$ and $t <0$ is concerned.
\end{abstract}


\section{Introduction}

Consider
\begin{equation}
\label{eq:moto:op}
P=-D_t^2+A_2(t, x, D)+A_0(t, x, D)D_t+A_1(t, x, D)
\end{equation}
where $A_j(t, x, D)$ are classical pseudodifferential operators of order $j$. Denote  the  principal symbol by
\[
p(t, x, \tau, \xi )=-\tau^2+a(t, x, \xi )
\]
where $a(t, x, \xi)$ is positively homogeneous of degree $2$ in $\xi$, smooth  in $(-T, T)\times U\times (\R^d\setminus 0)$ and satisfies
\begin{equation}
\label{eq:half:hyp}
a(t, x, \xi)\geq 0,\quad (t, x, \xi)\in [0,T_1)\times U\times \R^d
\end{equation}
with some $T_1>0$ and some neighborhood $U$ of $0\in \R^{d}$. Note that if $(0, 0, \tau, \xi)$, $(\tau, \xi)\neq 0$ is a singular point of $p=0$ then $\tau=0$ and $a(0, 0, \xi)=0$.
In what follows we always assume that a singular point $(0, 0, 0, {\bar\xi})$, ${\bar\xi}\neq 0$ is effectively hyperbolic, that is the Hamilton map (e.g. \cite{IP}, \cite{Hobook})
\[
F_p=\frac{1}{2}\begin{pmatrix}\partial^2p/\partial x\partial\xi& \partial^2p/\partial \xi\partial\xi\\
-\partial^2p/\partial x\partial x&-\partial^2p/\partial \xi\partial x
\end{pmatrix}
\]
has nonzero real eigenvalues at $(0,0,0,{\bar \xi})$. 
Assuming a slightly stronger assumption than \eqref{eq:half:hyp} such that
\[
a(t, x, \xi)\geq 0,\quad (t, x, \xi)\in (-\delta_1,T_1)\times U\times \R^d
\]
with some $\delta_1>0$ one can find a smooth function $\varphi(x,\xi)$  in some conic neighborhood $V$ of $(0,{\bar\xi})$, homogeneous of degree $0$ in $\xi$,  and constants $0<\kappa<1$, $c>0$, $\delta>0$  satisfying
\[
a(t, x, \xi)\geq c(t-\varphi(x,\xi))^2|\xi|^2,\quad
\{\varphi, a\}^2\leq 4\kappa a
\]
in $|t|<\delta$, $(x,\xi)\in V$ (\cite[Lemma 1.2.2]{Ni:book}) where $\{\varphi, a\}$ denotes the Poisson bracket of $\varphi$ and $a$. This is the key to proving that the Cauchy problem for $P$ with Cauchy data on $t=0$ is $C^{\infty}$ well-posed for any lower order term in \cite{Ni2, Ni:book}.

To derive energy estimates under the present assumption \eqref{eq:half:hyp} we need some modifications in the arguments in \cite{Ni2, Ni:book}. Our aim in this note is to prove

\begin{prop}
\label{pro:niji:jiko}Assume \eqref{eq:half:hyp}. If a singular point  $(0, 0, 0, {\bar\xi})$ of $p=0$  is effectively hyperbolic then there exist a smooth function $\varphi(x,\xi)$ in a conic neighborhood $V$ of $(0,{\bar\xi})$, homogeneous of degree $0$ in $\xi$, and constants $0<\kappa<1$, $c>0$, $\delta>0$  such that
\begin{equation}
\label{eq:seigen:niji}
a(t, x, \xi)\geq c\min{\big\{t^2, (t-\varphi(x,\xi))^2\big\}}|\xi|^2,\quad
\{\varphi, a\}^2\leq 4\kappa a
\end{equation}
for $ (t, x,\xi)\in [0,\delta)\times V$ 
where  $\varphi(x,\xi)$ satisfies $\big|\dif_x^{\al}\dif_{\xi}^{\be}\varphi\big|\precsim \langle{\xi}\rangle^{-|\be|}$ for any $\al,\be\in\N^d$.
\end{prop}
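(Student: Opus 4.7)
The plan is to mirror the proof of \cite[Lemma 1.2.2]{Ni:book} after reducing $a$ to a Weierstrass-type normal form, tracking the modifications forced by one-sided nonnegativity. Effective hyperbolicity at $(0,0,0,{\bar\xi})$ together with \eqref{eq:half:hyp} forces $\dif_t^2 a(0,0,{\bar\xi})>0$: the $(x,\xi)$-Hessian of $a$ at this point is positive semidefinite, being the leading Taylor coefficient in $t$ of the nonnegative function $a(0,x,\xi)$, so its Hamilton map contributes only imaginary eigenvalues, and the required real nonzero eigenvalue of $F_p$ must come from the $(t,\tau)$-block. Smooth preparation applied to $a/|\xi|^2$ as a function of $t$ with parameters $(x,\xi/|\xi|)$ then produces, in a conic neighborhood of $(0,{\bar\xi})$,
\[
a(t,x,\xi)=e(t,x,\xi)\bigl[(t-\varphi(x,\xi))^2-\mu(x,\xi)\bigr],
\]
with $e$ elliptic and homogeneous of degree $2$ in $\xi$, and $\varphi,\mu$ smooth, homogeneous of degree $0$ in $\xi$, vanishing at $(0,{\bar\xi})$. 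Homogeneity of degree zero supplies the symbol estimates stated for $\varphi$.

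Since $e>0$, \eqref{eq:half:hyp} amounts to $(t-\varphi)^2\geq\mu$ for all $t\in[0,\delta)$ in the neighborhood. Setting $t=0$ yields $\mu\leq\varphi^2$, and wherever $\mu>0$ both real roots $\varphi\pm\sqrt{\mu}$ must lie in $(-\infty,0]$, forcing $\varphi\leq-\sqrt{\mu}<0$. Two cases now establish the first half of \eqref{eq:seigen:niji}. If $\mu\leq 0$ then $a\geq e(t-\varphi)^2$, which already dominates the minimum. If $\mu>0$ then $\varphi<0$ and $t\geq 0$ give $(t-\varphi)^2\geq t^2$, so the minimum equals $t^2$, while
\[
(t-\varphi)^2-\mu=t^2-2t\varphi+(\varphi^2-\mu)\geq t^2
\]
(using $t\geq 0$, $\varphi\leq 0$, $\varphi^2\geq\mu$) yields $a\geq et^2$. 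Choosing $c<\inf(e/|\xi|^2)$ on the neighborhood completes the lower bound.

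For the Poisson-bracket inequality, the identities $\{\varphi,\varphi\}=0$, $\{\varphi,(t-\varphi)^2\}=0$, and (by Euler's relation for degree-zero symbols) $\{\varphi,|\xi|^2\}=0$ reduce the computation to
\[
\{\varphi,a\}=a\,\{\varphi,\ln e\}-e\,\{\varphi,\mu\}.
\]
Squaring and using $(A+B)^2\leq 2A^2+2B^2$ splits the task in two. The first piece is bounded by $2a^2\{\varphi,\ln e\}^2\leq C\,\hat a\cdot a$ with $\hat a=a/|\xi|^2\to 0$ at the singular point, and is absorbed into $\kappa a$ by shrinking $V$ and $\delta$. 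For the second, Glaeser's inequality applied at each fixed $t\in[0,\delta)$ to the nonnegative $(x,\xi)\mapsto a(t,x,\xi)$ yields $|\nabla_{x,\xi}a|^2\lesssim a$, which, propagated through the factorization using $a\geq-e\mu$ on the set where $\mu\leq 0$ and using $\mu\leq\varphi^2$ together with the direct Glaeser bound on the set where $\mu>0$, gives $2e^2\{\varphi,\mu\}^2\lesssim a$.

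The delicate point is sharpening the constant so that the total estimate becomes $\{\varphi,a\}^2\leq 4\kappa a$ with strict $\kappa<1$, rather than merely a finite multiple of $a$. This requires the sharp Glaeser constant together with the geometric constraint $\varphi+\sqrt{\mu}\leq 0$ on $\{\mu>0\}$, and the remaining lower-order contributions are absorbed by further shrinking the conic neighborhood. The argument otherwise parallels the two-sided case of \cite[Lemma 1.2.2]{Ni:book}; the only qualitatively new feature is the $t^2$ in the minimum, which encodes the fact that one-sided nonnegativity allows $\mu$ to take positive values and then forces $\varphi<0$, placing the zeros of $(t-\varphi)^2-\mu$ into $t<0$.
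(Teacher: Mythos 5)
The lower-bound half of your argument is essentially sound, and is in fact more direct than the paper's: once $\dif_t^2a(0,0,{\bar\xi})\neq 0$ is known, a single Malgrange preparation in $t$ gives $a=e\big[(t-\varphi)^2-\mu\big]$, and your sign discussion of the roots does yield $a\geq c\min\{t^2,(t-\varphi)^2\}|\xi|^2$ for $t\geq 0$ (two small repairs: the real roots are only excluded from $[0,\delta)$, so you must first shrink the neighborhood so that $|\varphi|+\sqrt{\max(\mu,0)}<\delta$ before concluding $\varphi\leq-\sqrt{\mu}$; and your block argument for $\dif_t^2a(0,0,{\bar\xi})>0$ silently assumes the mixed derivatives $\dif_t\dif_x a$, $\dif_t\dif_\xi a$ vanish at the point, which is not automatic --- the paper derives exactly this from a scaling argument using $a\geq0$ for $t\geq 0$. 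Also $\{\varphi,|\xi|^2\}=-2\sum_j\xi_j\dif_{x_j}\varphi$ is not $0$ by Euler's identity, though that slip is harmless.)

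The genuine gap is the second inequality of \eqref{eq:seigen:niji}. For the $\varphi$ produced by one preparation, $\{\varphi,a\}^2\leq 4\kappa a$ with $\kappa<1$ is in general \emph{false}, even at effectively hyperbolic points, so no sharpening of Glaeser constants can close your final paragraph. Take $a=(t-x_1)^2\xi_d^2+(x_1-x_2)^2\xi_d^2+\tfrac32\xi_1^2+\tfrac32\xi_2^2$: the point $(0,0,0,e_d)$ is effectively hyperbolic (this is the paper's case \eqref{eq:form:2} with ${\bar r}_1={\bar r}_2=\tfrac32$, so $\sum{\bar r}_i^{-1}=\tfrac43>1$, i.e.\ \eqref{eq:form:bbis}), the preparation in $t$ gives $\varphi=x_1$, yet on $t=x_1=x_2$, $\xi_2=0$ one has $\{\varphi,a\}^2=9\xi_1^2=6a>4a$; the correct choice is $\varphi=\tfrac12(x_1+x_2)$. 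This is precisely where effective hyperbolicity must enter quantitatively, whereas your proof uses it only to obtain $\dif_t^2a\neq0$. The paper's route is the inductive normal form of Proposition \ref{pro:ojm}: either one arrives at \eqref{eq:form:1} with \eqref{eq:form:a}, where $\{\varphi,\{\varphi,a\}\}$ vanishes at the reference point and the one-sided Glaeser inequality (applied along $H_\varphi$, which does not move $t$) gives an arbitrarily small $\kappa$; or one arrives at \eqref{eq:form:2}, where effective hyperbolicity is encoded as $\sum{\bar r}_i^{-1}>1$ and one takes $\varphi=\sum\ep_ix_i$ with $\sum\ep_i=1$ and $\sum\ep_i^2{\bar r}_i<1$, so that $|H_\varphi^2a|<2$ near the point and Glaeser yields $\kappa<1$. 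Some such renormalization of $\varphi$ --- iterated symplectic reductions and, in the second case, the weighted linear combination --- is indispensable; with the single prepared $\varphi$ the bracket estimate simply does not hold.
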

Note that the condition $\{\varphi, a\}^2\leq 4\kappa a$ with $\kappa<1$ in \eqref{eq:seigen:niji} implies  
that $f=t-\varphi(x, \xi)$ is a time function at $\rho=(0, 0, 0, {\bar \xi})$ for $p$ in the following sense \eqref{eq:time:fun}: Write $\rho'=(0, 0, {\bar\xi})$ and denote the quadratic part of $a((t, x, \xi)+\rho')$ by $Q(t, x, \xi)$ such that $
p((t, x, \tau, \xi)+\rho)=-\tau^2+Q(t, x, \xi)+O(|(t, x, \xi)|^3)$. Denoting $p_{\rho}=-\tau^2+Q(t, x, \xi)$ we have
\begin{equation}
\label{eq:time:fun}
p_{\rho}(-H_f(\rho'))<0
\end{equation}
where $H_f=(0, -\dif \varphi/\dif \xi, -1, \dif \varphi/\dif x)$ is the Hamilton vector field of $f$. To check \eqref{eq:time:fun}, noting that the condition $\{\varphi, a\}^2\leq 4\kappa a$ is invariant under symplectic change of coordinates $(x, \xi)$, one can assume either $\varphi=x_1$ or $d\varphi(0, {\bar \xi})=0$. In the latter case \eqref{eq:time:fun} is obvious. When $\varphi=x_1$ the assertion is also clear if $Q(0, x, \xi)$ contains no $\xi_1$. If not one can write $Q(0, x, \xi)=c(\xi_1-h(x, \xi'))^2+g(x, \xi')$ where $\xi'=(\xi_2,\ldots, x_n)$ with $c\neq 0$. Since we have $0<c\leq \kappa$ for $\{\varphi, a\}^2\leq 4\kappa a$ we conclude \eqref{eq:time:fun}. It is obvious that $f=t$ is also a time function at $\rho$ for $p$. 

Once Proposition \ref{pro:niji:jiko} is proved, applying the same pseudodifferential weight as in \cite{Ni:Ivconj}, one can prove that the Cauchy problem for $P$ with Cauchy data on $t=0$ is $C^{\infty}$ well-posed for any lower order term.

\section{Proof of Proposition \ref{pro:niji:jiko}}

In what follows we denote $x^{(p)}=(x_p, \ldots, x_d)$, $\xi^{(p)}=(\xi_p, \ldots, \xi_d)$, $1\leq p\leq d$ and $t=x_0$.
Note that $(0, 0, 0, {\bar \xi})$ is a singular point of $p=0$ implies 
\begin{equation}
\label{eq:katei}
\dif_t^k\dif_x^{\al}\dif_{\xi}^{\be}a(0, 0, {\bar\xi})=0,\quad k+|\al+\be|=1.
\end{equation}
\begin{prop}
\label{pro:ojm}Assuming the same assumption as in Proposition \ref{pro:niji:jiko}  one can find a homogeneous symplectic coordinates $(x, \xi)$ around $(0, {\bar \xi})$ such that $(0, e_d)=(0, {\bar\xi})$ and $a(t, x, \xi)$ takes the following form \eqref{eq:form:1} with \eqref{eq:form:a} or \eqref{eq:form:2} with \eqref{eq:form:b} and \eqref{eq:form:bbis};
\begin{equation}
\label{eq:form:1}
\begin{split}
&\sum_{i=1}^p(x_{i-1}-x_i)^2q_i(t, x,\xi)+\sum_{i=1}^p\xi_i^2r_i(t, x,\xi)\\
+\big\{(x_p-&\phi_p(x^{(p+1)}, \xi^{(p+1)}))^2
+\psi_p(x^{(p+1)}, \xi^{(p+1)})\big\}q_{p+1}(t, x,\xi),
\end{split}
\end{equation}
\begin{equation}
\label{eq:form:a}
\{\phi_p, \{\phi_p, \psi_p\}\}(0, 0,e_d)=0,
\end{equation}
where $0\leq p\leq d-1$ and
\begin{equation}
\label{eq:form:2}
\sum_{i=1}^p(x_{i-1}-x_i)^2q_i(t, x,\xi)+\sum_{i=1}^p\xi_i^2r_i(t, x,\xi)
+g_p(x^{(p)}, \xi^{(p+1)})r_{p}(t, x,\xi),
\end{equation}
\begin{equation}
\label{eq:form:b}
\{\xi_p, \{\xi_p, g_p\}\}(0, 0,e_d)=0,
\end{equation}
\begin{equation}
\label{eq:form:bbis}
\sum_{i=1}^pr_i^{-1}(0, 0, e_d)>1
\end{equation}
where $1\leq p\leq d-1$.  
In both cases $q_i(t, x,\xi)$, $r_i(t, x,\xi)$ are homogeneous of degree $2$, $0$ respectively in $\xi$, positive at $(0, 0, e_d)$ and $\phi_p$, $\psi_p$, $g_p$ are  homogeneous of degree $0$, $0$, $2$ respectively in $\xi$ vanishing at $(0, 0, e_d)$.
\end{prop}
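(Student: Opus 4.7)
I would establish the canonical form by an induction on $p$, extracting a pair of squared factors $\xi_i^2 r_i$ and $(x_{i-1}-x_i)^2 q_i$ at each step. The induction hypothesis at level $p$ is that, after a homogeneous symplectic change of coordinates,
\[
a = \sum_{i=1}^p (x_{i-1}-x_i)^2 q_i(t,x,\xi) + \sum_{i=1}^p \xi_i^2 r_i(t,x,\xi) + R_p(t,x,\xi),
\]
where $q_i, r_i$ are positive at $(0,0,e_d)$ and the residue $R_p$ is nonnegative on $[0,\delta)\times V_p$, vanishes to second order at $(0,0,e_d)$, and is adapted to the remaining coordinates $(t, x^{(p)}, \xi^{(p)})$ in the sense that its quadratic Taylor part at $(0,0,e_d)$ involves only those variables. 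The base case $p=0$ is $R_0 = a$, using \eqref{eq:katei} and \eqref{eq:half:hyp}.

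For the inductive step I would inspect the $\xi_p$-Hessian of $R_p$ at $(0,0,e_d)$. If it is strictly positive, the Malgrange preparation theorem distinguishes $\xi_p$ and produces a factor $\xi_p^2 r_p$, with the linear-in-$\xi_p$ remainder removable by a canonical translation in the conjugate variable $x_{p-1}$. A further symplectic adjustment mixing $x_{p-1}$ and $x_p$---whose existence, and the positivity of the resulting $q_p$, follow from the nonnegativity of the updated residue on $t\geq 0$---transfers the residual $x_p$-dependence into the squared factor $(x_{p-1}-x_p)^2 q_p$, advancing the induction to level $p+1$. The recursion halts for one of two reasons. First, the next step may yield only a pseudo-square $(x_p-\phi_p)^2+\psi_p$ whose $\phi_p, \psi_p$ depend on $(x^{(p+1)}, \xi^{(p+1)})$ alone and cannot be further decomposed, producing form \eqref{eq:form:1}. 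Second, the $\xi_p$-Hessian of $R_p$ may already vanish, in which case $R_p$ lacks the $\xi_p^2$-structure needed to extract another square and must take the shape $g_p(x^{(p)}, \xi^{(p+1)})r_p(t,x,\xi)$, producing form \eqref{eq:form:2}.

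The termination indicators \eqref{eq:form:a}, \eqref{eq:form:b}, and \eqref{eq:form:bbis} are the obstructions that appear at the end of the recursion: the Poisson-bracket conditions express that no further symplectic change produces another squared factor, while the quantitative bound \eqref{eq:form:bbis} reflects the portion of effective hyperbolicity already absorbed into $r_1,\ldots,r_p$. The main difficulty I anticipate is this bookkeeping---tracking the Hamilton map $F_p$ at $(0,0,0,\bar\xi)$ through each symplectic reduction and showing, using that $F_p$ retains a nonzero real eigenvalue throughout, that the recursion cannot terminate in a degenerate form and that \eqref{eq:form:a}--\eqref{eq:form:bbis} follow automatically from the nonnegativity of $a$ on $t\geq 0$ combined with effective hyperbolicity. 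The homogeneity bounds on the resulting objects are preserved at each step because every coordinate change used is homogeneous of degree zero in $\xi$.
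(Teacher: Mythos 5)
Your plan follows, in outline, the same route as the paper (alternating Malgrange preparations with homogeneous symplectic changes of coordinates, the double Poisson brackets acting as stopping conditions), but it leaves unproved exactly the points where effective hyperbolicity and the one\emph{-}sided condition \eqref{eq:half:hyp} must actually do work. First, your base case tacitly assumes the preparation in $t$ can be started, i.e.\ that $\partial_t^2a(0,0,e_d)\neq 0$. Since $a\geq 0$ is assumed only for $t\geq 0$, this is not free: the paper proves it by the scaling $a(\epsilon^2t,\epsilon^3x,\epsilon^3\xi+e_d)$, showing that $\partial_t^2a(0,0,e_d)=0$ together with one-sided nonnegativity forces $\partial_t\partial_x^{\alpha}\partial_{\xi}^{\beta}a(0,0,e_d)=0$ for $|\alpha+\beta|=1$, whence $F_p(0,0,0,e_d)$ would have only purely imaginary eigenvalues, contradicting effective hyperbolicity. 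Second, in the inductive step the nondegeneracy of a Hessian alone does not let you ``distinguish $\xi_p$'': to introduce the new symplectic pair one must extend $\phi_{p-1}$ (and later $\xi_p-h_p$, $x_p$) to \emph{homogeneous} symplectic coordinates, which requires checking linear independence of the relevant differentials from $\sum_{j\geq p}\xi_jdx_j$ at $(0,e_d)$; this is supplied precisely by the failure of \eqref{eq:form:a} via Euler's identity. Your ``canonical translation in the conjugate variable $x_{p-1}$'' does not capture this mechanism (the linear term $h_p$ is absorbed by taking $\xi_p-h_p$ itself as a new conjugate coordinate), and your claim that a vanishing $\xi_p$-Hessian by itself forces the residue to take the $\xi_p$-free shape $g_p(x^{(p)},\xi^{(p+1)})r_p$ is unjustified as stated --- in the paper that shape is the remainder produced by the $\xi_p$-preparation, not a consequence of a degenerate Hessian.

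The more serious gap is \eqref{eq:form:bbis}, which you defer as ``bookkeeping'' and assert follows automatically. It does not, and it is the only place where effective hyperbolicity enters quantitatively; note also that \eqref{eq:form:a} and \eqref{eq:form:b} are not consequences of anything --- they merely label where the recursion stops --- so lumping them with \eqref{eq:form:bbis} conflates definitions with the actual claim. To get \eqref{eq:form:bbis} one must (i) show, using \eqref{eq:form:b}, the inequality $g_p(x_p,x^{(p+1)},\xi^{(p+1)})\geq 0$ for $x_p\geq 0$ (obtained by evaluating $a$ along $x_0=\cdots=x_p\geq0$, $\xi_1=\cdots=\xi_p=0$) and the same scaling argument as above, that the mixed derivatives $\partial_{x_\mu}\partial_{x_p}g_p(0,e_d)$ and $\partial_{\xi_\nu}\partial_{x_p}g_p(0,e_d)$ vanish, so that the quadratic approximation of $p$ at $(0,0,0,e_d)$ splits as $-\tau^2+Q_1+Q_2$ with $Q_1=\sum_{i=1}^p{\bar q}_i(x_{i-1}-x_i)^2+\sum_{i=1}^p{\bar r}_i\xi_i^2$ and $Q_2$ positive semidefinite, hence contributing only purely imaginary eigenvalues; and (ii) compute the characteristic polynomial of the Hamilton map of $-\tau^2+Q_1$ explicitly, obtaining $\lambda^2\psi(\lambda)$ with $\psi(0)=-\bigl(\prod_{j=1}^p4{\bar q}_j\bigr)\bigl(\prod_{j=1}^p{\bar r}_j\bigr)\bigl(\sum_{j=1}^p{\bar r}_j^{-1}-1\bigr)$, so that, by the Ivrii--Petkov description of the spectrum (at most one pair of nonzero real simple eigenvalues), effective hyperbolicity is equivalent to $\sum_{i=1}^p{\bar r}_i^{-1}>1$. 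Without (i) and (ii) your argument produces at best the shapes \eqref{eq:form:1} and \eqref{eq:form:2}, not the precise dichotomy ``\eqref{eq:form:1} with \eqref{eq:form:a}, or \eqref{eq:form:2} with \eqref{eq:form:b} \emph{and} \eqref{eq:form:bbis}'' that the proof of Proposition \ref{pro:niji:jiko} requires.
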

\begin{proof}
It is enough to repeat exactly the same arguments as the proof of \cite[Theorem 1.1]{Ni2:bis} with minor changes.  After a linear change of coordinates $x$ one can assume that $(0, {\bar\xi})=(0, e_d)$.  If $\dif_t^2a(0, 0, e_d)=0$ then the Taylor expansion of $a(t, x, \xi+e_d)$ at $(t, x, \xi)=(0, 0, 0)$ gives
\[
a(\ep^2t, \ep^3x,\ep^3\xi+e_d)=\ep^5 t\sum_{|\al+\be|=1}\dif_t\dif_x^{\al}\dif_{\xi}^{\be}a(0, 0, e_d)x^{\al}\xi^{\be}+O(\ep^6)\;\;(\ep\to 0)
\]
which proves that $\dif_t\dif_x^{\al}\dif_{\xi}^{\be}p(0, 0, 0, e_d)=0$ for $|\al+\be|=1$ since the left-hand side is nonnegative for small $t\geq 0$ and $(x, \xi)$ near $(0, 0)$. Then it is easy to see that $F_p(0, 0, 0, e_d)$ has only pure imaginary eigenvalues, contradicting to that $(0, 0, 0, e_d)$ is effectively hyperbolic. Thus we conclude  $\dif_t^2a(0, 0, e_d)\neq 0$. 

We first prove that we have either \eqref{eq:form:1} with \eqref{eq:form:a} or \eqref{eq:form:2} with \eqref{eq:form:b}. After that we show \eqref{eq:form:bbis} if the case \eqref{eq:form:2} with \eqref{eq:form:b} occurs. From the Malgrange preparation theorem, one can write
\[
a(t, x, \xi)=\big\{(t-\phi_0(x^{(1)}, \xi^{(1)}))^2+\psi_0(x^{(1)}, \xi^{(1)})\big\}q_1(t, x^{(1)}, \xi^{(1)})
\]
where $\phi_0$, $\psi_0$ are homogeneous of degree $0$ vanishing at $(0, e_d)$ and $q_1$ is homogeneous of degree $2$, $q_1(0,0,e_d)\neq 0$. Since $a(t, 0, e_d)=t^2q_1(t,0,e_d)$ it follows that $q_1(0,0, e_d)>0$. Hence if $\{\phi_0,\{\phi_0, \psi_0\}\}(0, e_d)=0$ this is just \eqref{eq:form:1} with \eqref{eq:form:a} with $p=0$. We go on to the induction on $p$. Assume that \eqref{eq:form:1} holds with $p-1$ while \eqref{eq:form:a} with $p-1$ fails. Set $X_p(x^{(p)}, \xi^{(p)})=\phi_{p-1}(x^{(p)}, \xi^{(p)})$. Note that $d\phi_{p-1}$ and $\sum_{j=p}^d\xi_jdx_j$ are linearly independent at $(0,e_d)$. In fact if not we would have $
\{\phi_{p-1},\{\phi_{p-1},\psi_{p-1}\}\}(0,e_d)=0
$ 
thanks to Euler's identity, which contradicts the assumption. Thus  we can find a homogeneous symplectic coordinates $\{X_j(x^{(p)}, \xi^{(p)}),\, \Xi_j(x^{(p)}, \xi^{(p)})\}_{j=p}^d$ (e.g. \cite[Theorem 21.1.9]{Hobook}) such that
\begin{equation}
\label{eq:sym:cor}
X _j(0,e_d)=0,\;p\leq j\leq d,\;\;\Xi_j(0,e_d)=0,\;p\leq j\leq d-1,\;\Xi_d(0,e_d)\neq 0.
\end{equation}
Denoting $\{X_j, \,\Xi_j\}_{j=p}^d$ by $\{x_j,\xi_j\}_{j=p}^d$ again and noting that
$\dif_{\xi_p}^2\psi_{p-1}(0,e_d)\neq 0$ thanks to the Malgrange preparation theorem one can write
\[
\psi_{p-1}(x^{(p)}, \xi^{(p)})=\big\{(\xi_p-h_p(x^{(p)}, \xi^{(p+1)}))^2+g_p(x^{(p)},\xi^{(p+1)})\big\}b_p(x^{(p)}, \xi^{(p)})
\]
where $b_p$ is of homogeneous of degree $-2$ with $b_p(0,e_d)\neq 0$ and $h_p$ and $g_p$ are homogeneous of degree $1$, $2$ respectively, vanishing at $(0, e_d)$. Take $x=0$ and $\xi_j=0$ unless $j=p, d$ and $\xi_d=1$ then we have
\[
a(0, 0, \xi)=\xi_p^2b_p(0,e_d)q_p(0, 0, \xi)\geq 0
\]
which implies that $b_p(0, e_d)>0$ for $q_p(0, 0, e_d)>0$. Set
\[
\Xi_p(x^{(p)}, \xi^{(p)})=\xi_p-h_p(x^{(p)}, \xi^{(p+1)}),\quad X_p(x^{(p)}, \xi^{(p)})=x_p.
\]
It is clear that $\{\Xi_p, X_p\}=1$ and the differentials $\sum_{j=p}^d\xi_jdx_j$, $d\Xi_p$ and $dX_p$ are linearly independent at $(0,e_d)$. Indeed if $dx_d=\al d\Xi_p+\be dX_p$ with some $\al, \be$ then applying $H_{X_p}$ to this relation we conclude $\al=0$ hence $dx_d=\be dX_p$ which is a contradiction because $p\leq d-1$. Again from \cite[Theorem 21.1.9]{Hobook} one can extend $\Xi_p, \,X_p$ to a homogeneous symplectic coordinates $\{X_j(x^{(p)}, \,\xi^{(p)}), \Xi_j(x^{(p)}, \xi^{(p)})\}_{j=p}^d$ verifying \eqref{eq:sym:cor}. Since $0=\{\xi_j, x_p\}=\{\xi_j, X_p\}=-\dif\xi_j/\dif \Xi_p$, $p+1\leq j\leq d$ and $0=\{x_j, x_p\}=\{x_j, X_p\}=-\dif x_j/\dif \Xi_p$ we see that $\xi_j(X^{(p)}, \Xi^{(p)}),\, \;p+1\leq j\leq d$ and $x_j(X^{(p)}, \Xi^{(p)}),\;p\leq j\leq d$ are independent of $\Xi_p$. Thus we obtain \eqref{eq:form:2} with $p$ where $r_p=b_pq_p$ which is positive at $(0,e_d)$. Now assume that \eqref{eq:form:b} with $p$ fails. Then one can write
\[
g_p(x^{(p)}, \xi^{(p+1)})=\big\{(x_p-\phi_p(x^{(p+1)}, \xi^{(p+1)})^2+\psi_p(x^{(p+1)}, \xi^{(p+1)})\big\}c_p(x^{(p)}, \xi^{(p+1)})
\]
where $c_p$ is homogeneous of degree $2$ with $c_p(0,e_d)\neq 0$ and $\phi_p$, $\psi_p$ are homogeneous of degree $0$, vanishing at $(0, e_d)$. Choose $x^{(p+1)}=0$, $\xi=e_d$ and $x_p=\cdots=x_1=x_0(=t)\geq 0$ then
\[
a(t, x, \xi)=x_p^2c_p(x_p, 0, e_d)r_p(t, x, \xi)\geq 0
\]
hence $c_p$ is positive at $(0,e_d)$ and so is $q_{p+1}=c_p r_p$ because $r_p(0, 0, e_d)>0$. Thus we conclude that \eqref{eq:form:1} with $p$ holds. Therefore the induction on $p$ proves the assertion.

We now show that if the case \eqref{eq:form:2} with \eqref{eq:form:b} occurs then we have \eqref{eq:form:bbis}. 
Choosing $(t=)x_0=\cdots=x_p\geq 0$ and $\xi_1=\cdots=\xi_p=0$ in \eqref{eq:form:2} we have $
a(t, x, \xi)=g_p(x^{(p)},\xi^{(p+1)})r_p(t, x, \xi)\geq 0$ hence
\[
g_p(x_p, x^{(p+1)},\xi^{(p+1)})\geq 0, \quad x_p\geq 0
\]
because $r_p$ is positive at $(0, 0, e_d)$. Since \eqref{eq:form:b} implies that $\dif_{x_p}^2g_p(0,e_d)=0$ repeating the same argument as in the beginning of the proof of Proposition  \ref{pro:ojm} we see that 
\[
\dif_{x_{\mu}}^{k}\dif_{\xi_{\nu}}^{l}\dif_{x_p}g_p(0,e_d)=0, \;\;p+1\leq \mu, \nu\leq d,\quad k+l=1.
\]
This shows that the corresponding quadratic form at $(0,e_d)$ is
\[
\sum_{i=1}^p{\bar q_i}(x_{i-1}-x_i)^2+\sum_{i=1}^p{\bar r_i}\xi_i^2
+{\bar r_p}\big(\sum_{p+1\leq \mu,\nu\leq d, k+l=2}\dif_{x_{\mu}}^{k}\dif_{\xi_{\nu}}^{l}g_p(0,e_d)x_{\mu}^{k}\xi_{\nu}^{l}\big)
\]
where ${\bar q_i}=q_i(0, 0,e_d)$ and the same for ${\bar r_i}$. Denote the sum of the first two terms by $Q_1$ and the third term by $Q_2$. Since $g_p(0, x^{(p+1)}, \xi^{(p+1)})\geq 0$ then $Q_2$ is positive 
semi-definite and hence the eigenvalues of $F_{Q_2}$ are pure imaginary. It is easy to see that
\[
{\rm det}(\lam+F_p(0,e_d))={\rm det}(\lam+F_{Q_1}){\rm det}(\lam+F_{Q_2}).
\]
On the other hand, a direct computation gives
\[
{\rm det}(\lam+F_{Q_2})=\lam^2\psi(\lam),\quad \psi(0)=-\big(\prod_{j=1}^p4{\bar q_j}\big)\big(\prod_{j=1}^p{\bar r_j}\big)\big(\sum_{j=1}^p{\bar r_j}^{-1}-1\big).
\]
 From \cite{IP} the equation $\psi(\lam)=0$ has only pure imaginary roots except possibly a pair of nonzero real simple roots $\pm\mu$. Therefore $F_p(0, 0, 0, e_d)$ has nonzero real eigenvalues if and only if $\psi(0)<0$, that is \eqref{eq:form:bbis}.
\end{proof}

\noindent
Proof of Proposition \ref{pro:niji:jiko}: Without restrictions we may assume that $(0, {\bar \xi})=(0, e_d)$. Taking the homogeneity in $\xi$ it suffices to show \eqref{eq:seigen:niji} in a small neighborhood of $(0, e_d)$. First consider the case \eqref{eq:form:1}.  Denote $x_a=(x_1,\ldots,x_p)$, $x_b=(x_{p+1},\ldots, x_d)$ and $\xi_a=(\xi_1,\ldots, \xi_p)$, $\xi_b=(\xi_{p+1},\ldots, \xi_d)$ and, by an obvious abuse of notation, we often write $q(t, x, \xi)=q(t, x_a, \xi_a, x_b, \xi_b)$.  Let $\chi(s)\in C^{\infty}(\R)$ be such that $\chi(s)=s$ for $|s|\leq 1$, $|\chi(s)|=2$ for $|s|\geq 2$ and $\chi'(s)\geq 0$. By replacing $x_k$, $\xi_k$ by $\delta\chi(x_k/\delta)$, $\delta\chi(\xi_k/\delta)$, $k=1,\ldots, p$ with a small $\delta>0$ in $q_j(t, x, \xi)$, $r_j(t, x, \xi)$ we may assume that  such obtained  ones, which we denote by the same $q_j$, $r_j$, are defined for all $(x_a, \xi_a)\in \R^{2p}$. 
Considering $a/q_{p+1}$ we may assume that $q_{p+1}=1$. Writing  $(x_b, \xi_b)=z+(0,e_d)$ and $w=(y_a, \eta_a)$   consider
\begin{gather*}
Q(w,t, z, \varep)=\sum_{j=1}^p(y_{j-1}-y_j)^2{q_j}(t, \varep y_a+t,\varep\eta_a, z+(0,e_d))\\
+(y_p+1)^2+\sum_{j=1}^p\eta_j^2\,{r_j}(t, \varep y_a+t,\varep\eta_a, z+(0,e_d))
\end{gather*}
where $y_0=0$ and $\varep y_a+t=\varep y_a+(t,\ldots, t)$ ant the same for $\varep\eta_a+t$.  Note that if we choose $\varep=t-\phi(z)$ with $\phi(z)=\phi_p(z+(0, e_d))$ we have
\begin{gather*}
a(t, \varep y_a+t, \varep \eta_a, z+e_d)
=\varep^2Q(w, t, z, \varep)+\psi(z),\quad \varep=t-\phi(z)
\end{gather*}
where $\psi(z)=\psi_p(z+(0, e_d))$. Writing $\theta=(t, z, \varep)$ and note that
\[
Q(w, 0)=\sum_{j=0}^p(y_{j-1}-y_j)^2{\bar q}_j+(y_p+1)^2+\sum_{j=1}^p\eta^2_j{\bar r}_j
\]
hence $Q(w, 0)$ takes the positive minimum in $\R^{2p}$. Taking into account that $|\delta\chi(s/\delta)|\leq 2\delta$ for all $s\in \R$ we see that  for any $\ep>0$ one can choose  $\delta>0$ such that $\big|Q(w, \theta)-Q(w, 0)\big|\leq \ep\, Q(w, 0)$ for all $w\in \R^{2p}$ if $|t|+|z|<\delta$. Therefore for small $|\theta|$ there is a positive minimum of $Q(w, \theta)$ when $w$ varies in $\R^{2p}$ which is attained in $|w|<B$ with some $B>0$ independent of small $\theta$. Denote
\[
\min_{w\in\R^{2p}}Q(w, \theta)=m(\theta).
\]
Note that the Hessian $\nabla^2_wQ(w, \theta)$ of $Q(w, \theta)$ with respect to $w$ can be written $\nabla^2_wQ(w, \theta)=H+R(w, \theta)$ with a nonsingular constant matrix $H$. Here for any $\ep_1>0$ there exist $\delta>0$, $\delta_1>0$ such that $\|R(w, \theta)\|\leq \ep_1$ if $|\theta|< \delta_1$ and $|w|<B$ since $|(d/ds)^j\delta\chi(\varep s/\delta)|\leq C\varep^j\delta^{j-1}$.  Therefore thanks to the implicit function Theorem there exists a smooth ${\bar w}(\theta)$ near $\theta=(0, 0, 0)$ such that
\[
m(\theta)=Q( {\bar w}(\theta), \theta).
\]
Taking $\varep=t-\phi(z)$ we have
\begin{equation}
\label{eq:kihon}
\begin{split}
a(t, \varep y_a+t, \varep \eta_a, z+(0,e_d))=(t-\phi(z))^2Q(w, t, z, t-\phi(z))+\psi(z)\\
\geq m(t, z, t-\phi(z))(t-\phi(z))^2+\psi(z)=m_1(t, z)(t-\phi(z))^2+\psi(z)
\end{split}
\end{equation}
where we have set $m_1(t, z)=m(t, z, t-\phi(z))$. 

Assume $\phi(z)<0$ and hence $\varep=t-\phi(z)>0$ for $t\geq 0$. Then choosing $w=(y_a, \eta_a)$ so that $(\varep y_a+t, \varep\eta_a)=(x_a, \xi_a)$  one concludes that
\begin{gather*}
a(t, x, \xi)=a(t, \varep y_a+t, \varep\eta_a, z+(0,e_d))\geq m_1(t, z)(t-\phi(z))^2+\psi(z).
\end{gather*}
Moreover choosing $w={\bar w}(t, z, t-\phi(z))$ in \eqref{eq:kihon} we see
\[
m_1(t, z)(t-\phi(z))^2+\psi(z)\geq 0, \quad t\geq 0.
\]
In particular, taking $t=0$ we have
\begin{equation}
\label{eq:t_phi:sita}
m_1(0, z)\phi^2(z)+\psi(z)\geq 0.
\end{equation}
Noting that $m_1(t, z)\geq c_1>0$ we also have
\begin{gather*}
a(t, x, \xi)\geq m_1(t, z)\big(t-\phi(z)\big)^2+\psi(z)\\
=m_1(t,  z)\big(t^2+2t|\phi(z)|+\phi^2(z)\big)+\psi(z)\\
\geq c_1t^2+2c_1t|\phi(z)|+m_1(0, z)\phi^2(z)+\psi(z)\\
+\big(m_1(t,  z)-m_1(0, z)\big)\phi^2(z).
\end{gather*}
Since $|m_1(t,  z)-m_1(0, z)|\leq Ct$, in view of \eqref{eq:t_phi:sita} we see
\begin{equation}
\label{eq:1no1}
a(t, x, \xi)\geq c_1t^2+t|\phi(z)|\big(2c_1-C|\phi(z)|\big)\geq c_1t^2,\quad \phi(z)<0
\end{equation}
in a neighborhood of $(0, 0)$ because $\phi(0)=0$. Next if $\phi(z)\geq 0$ then choosing $x_j=\phi(z)\geq 0$, $0\leq j\leq p$ and $\xi_j=0$, $1\leq j\leq p$ in \eqref{eq:form:1} it follows that $\psi(z)\geq 0$ hence we have
\begin{equation}
\label{eq:1no2}
a(t, x, \xi)\geq \big(t-\phi(z)\big)^2,\quad \phi(z)\geq 0.
\end{equation}
 Thus from \eqref{eq:1no1} and \eqref{eq:1no2} we conclude 
\[
a(t, x, \xi)\geq c\,\min{\{t^2, (t-\phi(z))^2\}}|\xi|^2.
\]
Next estimate the Poisson bracket $\{\phi, a\}$. Recall that  $\{\phi,  \{\phi, a\}\}(0,0, e_d)=\{\phi_p, \{\phi_p, \psi_p\}\}(0,0, e_d)=0$ by \eqref{eq:form:a} since $\phi(z)=\phi_p(z+(0, e_d))$. Then for any $\ep_1>0$ one can find a neighborhood $U$ of $(0, e_d)$ such that
\[
\big|H_{\phi}^2\,a\big|=\big|\{\phi,\{\phi, a\}\}\big|\leq \ep_1,\quad ( x, \xi)\in U
\]
uniformly in $t\geq 0$. Since $a\geq 0$ for $t\geq 0$  thanks to the Glaeser's inequality we see that
\[
\big|H_{\phi}a\big|^2=\big|\{\phi, a\}\big|^2\leq 2\,\ep_1\,a,\quad t\geq 0
\]
which finishes the proof for the case \eqref{eq:form:1}.

Turn to the case \eqref{eq:form:2}. Denote $x_a=(x_1,\ldots, x_{p-1})$, $x_b=(x_{p+1},\ldots, x_d)$ and  $\xi_a=(\xi_1,\ldots, \xi_p)$, $\xi_b=(\xi_{p+1},\ldots, \xi_d)$. As above we extend $q_j$, $r_j$ so that such extended ones, containing $\delta>0$,  are defined for all $(x_a, \xi_a)\in \R^{p-1}\times \R^p$. Considering $a/{r_p}$ we may assume ${r_p}=1$ as before.
Consider 
\begin{gather*}
Q(w, t, z, x_p, \varep)=(y_1+1)^2{ q_1}(t, x_p-\varep y_a, x_p, \varep\eta_a, z+(0,e_d))\\
+\sum_{j=1}^{p-1}(y_j-y_{j+1})^2{ q_j}(t, x_p-\varep y_a, x_p, \varep\eta_a, z+(0,e_d))\\
+\sum_{j=1}^p\eta_j^2\,{ r_j}(t, x_p-\varep y_a, x_p, \varep\eta_a, z+(0,e_d))
\end{gather*}
where $y_p=0$,  $w=(y_a,\eta_a)\in\R^{2p-1}$, $(x_b, \xi_b)=z+(0,e_d)$ and $x_p-\varep y_a=(x_p,\ldots, x_p)-\varep y_a$ as before. Note that if we choose $\varep=t-x_p$ then
\begin{align*}
a(t, x_p-\varep y, x_{p}, \varep \eta, z+(0,e_d))
=\varep^2Q(w, t, z,  x_p, \varep)+g(x_p, z),\;\;\varep=t-x_p
\end{align*}
where $g(x_p,z)=g_p(x_p, x_b, \xi_b)$.  Denoting $\theta=(t, z, x_p, \varep)$ and noting
\[
Q(w,0)=(y_1+1)^2{\bar q}_1+\sum_{j=1}^{p-1}(y_j-y_{j+1})^2{\bar q}_j+\sum_{j=1}^p\eta_j{\bar r}_j
\]
one can repeat a similar argument as above to conclude that
\[
\min_{w\in\R^{2p-1}}Q(w, \theta)=m(\theta)=Q({\bar w}(\theta), \theta)
\]
where  ${\bar w}(\theta) $  is smooth near $\theta=0$. 
Choosing $\varep=t-x_p$ we have
\begin{equation}
\label{eq:kihon:2}
\begin{split}
a(t, x_p-\varep y_a, x_p, \varep \eta_a, z+(0,e_d))\\=(t-x_p)^2Q(w, t, z, x_p, t-x_p)+g(x_p, z)\\
\geq m(t, z, x_p, t-x_p)(t-x_p)^2+g(x_p, z)\\
=m_1(t, x_p, z)(t-x_p)^2+g(x_p, z)
\end{split}
\end{equation}
where we have set $m_1(t, x_p, z)=m(t, z, x_p, t-x_p)$. When $x_p<0$, repeating the same arguments as above one can find $c_1>0$
\begin{equation}
\label{eq:2no1}
a(t, x, \xi)\geq c_1t^2+t|x_p|(2c_1-C|x_p|)\geq c_1t^2,\quad x_p<0
\end{equation}
in a neighborhood of $(0, e_d)$. Assume $x_p\geq 0$. Thanks to Proposition \ref{pro:ojm} one has $\sum_{i=1}^p1/{\bar r}_i>1$ then one can find $\ep_i>0$ such that $
\sum_{i=1}^p\ep_i^2\,{\bar r}_i=\rho<1$ with $\sum_{i=1}^p\ep_i=1$. 
Define 
\[
\phi(x)=\sum_{i=1}^p\ep_i x_i.
\]
Since $x_{i-1}-x_i=0$, $i=1, \ldots, p$ implies $t-\phi(x)=x_0-x_0=0$ hence $t-\phi(x)$ is a linear combination of $x_{i-1}-x_i$ so that
\[
t-\phi(x)=\sum_{i=1}^p\alpha_i(x_{i-1}-x_i),\quad \al_i\in\R.
\]
Therefore it is clear that there is $C>0$ such that
\[
(t-\phi(x))^2\leq C\sum_{i=1}^p(x_{i-1}-x_i)^2{ q_i}.
\]
Since $g(x_p, z)\geq 0$ for $x_p\geq 0$ which follows from \eqref{eq:kihon:2} taking $x_p=t\geq 0$ we have $a(t, x,\xi)\geq \sum_{i=1}^p(x_{i-1}-x_i)^2{ q_i}$ which implies that there is $c'>0$ such that
\begin{equation}
\label{eq:2no2}
a(t, x,  \xi)\geq c'\, (t-\phi(x))^2,\quad x_p\geq 0.
\end{equation}
Thus from \eqref{eq:2no1} and \eqref{eq:2no2} we have 
\[
a(t, x, \xi)\geq c\,\min{\{t^2, (t-\phi(x))^2\}}|\xi|^2.
\]
It is clear that for any $\ep>0$ one can find a neighborhood $U$ of $(0, e_d)$ such that
\begin{gather*}
\big|H_{\phi}^2a\big|\leq 2\sum_{i=1}^p\ep_i^2\,{\bar r}_i+\ep=2\rho+\ep,\quad (x, \xi)\in U
\end{gather*}
uniformly in small $t\geq 0$ because $H_{\phi}^2a(0, 0, e_d)=2\sum_{i=1}^p\ep_i^2\,{\bar r}_i$. 
Since $a\geq 0$ for $t\geq 0$ it follows from the Glaeser's inequality that
\[
\big|H_{\phi}a\big|^2=\big|\{\phi, a\}\big|^2\leq 2(2\rho+\ep)a,\quad ( x, \xi)\in U
\]
for small $t\geq 0$ where one can assume $2\rho+\ep<2$ for $\rho<1$. Thus we have completed the proof for the case \eqref{eq:form:2}.
%


\end{document}